\newtheorem*{resumo}{Abstract}
\newtheorem*{unimodularconjecture}{Unimodular Conjecture}
\newtheorem*{unimodularII}{Unimodular Conjecture II}
\newtheorem{thm}{Theorem}
\newtheorem*{thmo}{Theorem}
\newtheorem*{thmA}{Theorem A}
\newtheorem*{cor}{Corollary}
\newtheorem*{Hlemma}{Hensel's Lemma}
\newtheorem*{jacobian}{Jacobian Conjecture}
\newtheorem*{jacobianii}{Conjecture}
\def\idd#1{\ensuremath{\langle{#1}\rangle}}
\newtheorem{lemma}{Lemma}
\newtheorem{prop}{Proposition}
\newtheorem{OBS}{Remark}
\newtheorem{ex}{Example}
\newtheorem{dfn}{Definition}
\newtheorem{pro}{Problem}
\DeclareMathOperator{\corpo}{k}
\DeclareMathOperator{\Spec}{\textbf{\mbox{Spec}}}
\DeclareMathOperator{\spm}{\textbf{Spm}}
\DeclareMathOperator{\ch}{\textbf{char}}
\DeclareMathOperator{\field}{k}
\author{Wodson Mendson}
\title{Some arithmetic aspects of polynomial maps}
\begin{document}

\maketitle

    \begin{resumo}  \normalfont     
        The Jacobian conjecture is a well-known open problem in affine algebraic geometry that asks if any polynomial endomorphism of the affine space $\mathbb{A}_{\mathbb{C}}^{n}$ ($n\geq2$) with jacobian $1$ is an automorphism. We present a survey about some results around this conjecture and we discuss an arithmetic aspect of this conjecture due to Essen-Lipton.  We investigate some cases of this arithmetic approach showing the close relationship between the Jacobian Conjecture and the problem of counting $\mathbb{F}_p$-points of an affine scheme.
        \end{resumo}
\setcounter{tocdepth}{1}
\tableofcontents

\section{Introduction}
Let $\field$ be an algebraically closed field of characteristic zero. The Jacobian Conjecture over $\field$ is a classical problem in affine algebraic geometry. It was first formulated by Ott-Heinrich Keller in $1939$ and asks whether a polynomial endomorphism $\psi$ of the $n$-dimensional affine space $\mathbb{A}_{\field}^{n} = \textbf{\mbox{Spec}}(\field[x_{1},\ldots,x_{n}])$ with
$$
\det((\frac{\partial\psi^{*}(x_{i})}{\partial x_{j}})_{1\leq i,j\leq n}) = 1
$$
is an automorphism, that is, there is a polynomial endomorphism $\gamma: \mathbb{A}_{\field}^{n} \longrightarrow \mathbb{A}_{\field}^{n}$ such that 
$$\gamma\circ \psi = id_{\mathbb{A}_{\field}^{n}} = \psi\circ\gamma.$$ 

 In \cite{poly1}, H. Bass, E. H. Connell, and D. Wright showed  that to prove the Jacobian Conjecture 
it is enough to prove it for endomorphism in the form $\psi =
X + H = (x_1+h_1,\ldots,x_n+h_n)$, where $h_i$ is 
homogeneous of degree $3$ for every $i$ with the Jacobian matrix of $H = (h_1,\ldots, h_n)$ nilpotent. A refinement, due to Essen-Bondt, ensures that it is sufficient to consider maps in the form $F = X + H$ with $H = (h_{1},\ldots,h_{n})$ homogeneous, $\deg(H) = 3$ with $H$ having jacobian matrix nilpotent \textbf{symmetric} (see \cite[Theorem 1.1]{de2005reduction}). 

An interesting arithmetical approach to the Jacobian Conjecture was explored on \cite{poly2}. In that paper, it is proved that the Jacobian Conjecture is related, in some sense, to a problem about counting $\mathbb{F}_p$-rational point of an affine scheme. To explain, let us consider the basic example: let $R$ be a local domain with maximal ideal $\mathbf{m}$ and residue field $\field$. Let $f_1,\ldots,f_n \in R[x_{1},\dots ,x_{n}]$ be homogeneous of degree one. Assume the Jacobian matrix associated with $f_1,\ldots,f_n$ is invertible. Denote this Jacobian matrix by $J$. Let $M \in \mathcal{M}_{n}(R)$ be the matrix such that $ JM = MJ = 1$. The
relation $M\cdot J = id$ implies that there exist $u_{1},\dots ,u_{n} \in R$ 
such that $f_{1}(u_{1},\dots ,u_{n}) = 1$. In particular,   since $\overline{f}(\overline{u}_1,\ldots, \overline{u}_n) \neq 0$ the induced polynomial map 
$$
f = (f_1,\ldots,f_n)\colon \mathbb{A}_{R}^{n}\longrightarrow \mathbb{A}_{R}^{n}
$$
induces a polynomial map:$\overline{f} = f\otimes R/\mathfrak{m}\colon \mathbb{A}_{\field}^{n} \longrightarrow \mathbb{A}_{\field}^{n}$ that is non-zero.
For degrees bigger than $1$ we have the following conjecture formulated by Essen-Lipton (see \cite{poly2}).

\begin{unimodularconjecture} Let $R$ be a local domain of characteristic zero with maximal ideal $\mathbf{m}$ and residue field $\field$. Then, for every $n\in \mathbb
{Z}_{>2}$ and polynomial map with jacobian invertible
$$
    f = (f_1,\ldots,f_n): \mathbb{A}_{R}^{n}\longrightarrow \mathbb{A}_{R}^{n}
$$
the induced polynomial map obtained by the reduction modulo $\mathfrak{m}$
$$
    f= (\overline{f_1},\ldots,\overline{f_n})\colon \mathbb{A}_{\field}^{n} \longrightarrow \mathbb{A}_{\field}^{n}
$$
is non-zero. 
\end{unimodularconjecture}

A simple fact is that if $\field$ is an infinite field then $R$ satisfies the Unimodular Conjeture (see Proposition \ref{casoinfinito}). When $\field$ is finite we get the following reformulation regarding rational points.

\begin{unimodularII} Let $R$ be local domain of characteristic zero with maximal ideal $\mathbf{m}$ and finite residue field $\field$. Let $f_1,\ldots,f_n \in R[x_1,\ldots,x_n]$ be polynomials and consider the affine scheme $X = \Spec(A)$ where $A = R[x_1,\ldots,x_n]/\idd{f_1,\ldots,f_n}$. Denote by $\overline{X}$ the affine scheme $\Spec(A\otimes k)$ obtained by the reduction modulo $\mathbf{m}$. Then, if the Jacobian matrix of $f_1,\ldots,f_n$ has determinant invertible then
$$
\#\overline{X}(k)<\#k^{n}
$$
where $\overline{X}(k)$ denotes the set of $k$-rational points of the scheme $\overline{X}$. Here, $\#k^n$ denotes the number of elements of $\field^n$
\end{unimodularII}

Motivated by the Unimodular Conjecture II we define 
the classes of $d$-\textbf{unimodular domains} and \textbf{
  invariant domains} and we explore conditions where the Unimodular Conjecture II is true. A local domain $(R,\mathfrak{m},\field)$, with $\field$ finite, is called $d$-unimodular if given $f_1,\ldots,f_n \in R[x_1,\ldots,x_n]$ with jacobian $1$ and $\deg(f_i)\leq d$ \textbf{for some} $i$ we have $\#X(k)<\#\field^{n}$. In direction, we prove the following:

\begin{thmA} Let $R$ be a local domain with maximal ideal $\mathbf{m}$ and finite residue field $\field$. Let $f \in \mathcal{MP}_{n}(R)$ be a polynomial map with jacobian $1$. Then, $f$ is $(\#k-1)$ unimodular.
\end{thmA}

The interesting fact is that the Unimodular Conjecture is related to the Jacobian Conjecture( see \cite[Theorem 6]{poly2} and \cite[Theorem 4.5.10]{van2021jacobian}):

\begin{thm} \label{EssenL} The $p$-adic integer ring
$\mathbb{Z}_{p}$ does satisfy the Unimodular Conjecture for \textbf{almost all primes $p$}
if and only if the Jacobian Conjecture is true. 
\end{thm}

We recall that "\textbf{almost all primes $p$}" means all primes of $\mathbb{Z}$ except a finite number. The curious fact is that there is no known example of a prime $p$ such that $\mathbb{Z}_p$ satisfies the \textbf{Unimodular Conjecture II}. In this direction,  naturally appears the problem:

\begin{pro}\label{exunimod} Find a unimodular prime $p \in \mathbb{Z}$.
\end{pro}

The Problem \ref{exunimod} above motivates the following question:

\begin{pro} In the statement of the Unimodular Conjecture, can we replace "\textbf{almost all primes $p$}", with "\textbf{infinitely many primes $p$}", or "\textbf{a prime $p$}"? 
\end{pro}

We present some results in this direction. In particular, we prove that we can replace "\textbf{almost all primes $p$}", with "\textbf{a prime $p$}" (see Theorem \ref{main}) and so that the Jacobian Conjecture is equivalent to finding a prime $p$ such that $\mathbb{Z}_p$ is unimodular. We introduce the notion of Keller-finite domains and establish the following improvement of the Essen-Lipton Theorem using this notion.

\begin{thm} Let $p$ be a prime number. The Jacobian Conjecture is equivalent to the unimodularity of $\mathbb{Z}_p$.
\end{thm}

To prove the theorem above we will define and explore the notion of Keller-finite domains and we will prove that every complete discrete valuation ring is Keller-finite (see Theorem \ref{main}).

\subsection{Organization of the paper} In Section \ref{notacoes} we fix the notations that will be used in the paper. In Section \ref{introd} we survey the main properties of polynomial maps defined over domains. In Section \ref{uidomains} we define the unimodular and invariant domains and explore some properties.  In the last section, we define the notion of Keller-finite domains and we prove that every complete discrete valuation ring is a Keller-finite domain. We use this fact to give a new proof of the Essen-Lipton theorem.

\section{Notation} \label{notacoes}

\begin{itemize}

    \item $R = $ domain
    
    \item $\overline{\field} = $ an algebraic closure of a field $\field$
    
    \item $\left<a_1,\ldots, a_n \right> = $ ideal generated by $a_1,\ldots, a_n \in R$

    \item $\mathcal{Z}(f_1,\ldots, f_r) = $ affine scheme/algebric set given by polynomials $f_1,\ldots, f_r$
    
    \item given $a = (a_1,\ldots, a_n)$ and $b = (b_1,\ldots, b_n)$ in $R^n$:  $\idd{a-b} = \left<a_1-b_1,\ldots,a_n-b_n \right> \subset R$
    
    \item $(R,\mathfrak{m},\field) = $ local domain with maximal ideal $\mathfrak{m}$ and residue field $\field$
    
    \item $\mathcal{MP}_{n}(R) = $ polynomial maps over $R$ = collection of $(f_1,\ldots,f_n)$ with $f_i \in R[x_1,\ldots,x_n]$

    \item $\mbox{\textbf{\mbox{Aut}}}(R) = $ the collection of invertible polynomial maps over $R$

    \item $J_{f} = $ the Jacobian matrix associated to a polynomial map $(f_1,\ldots,f_n)$
    
    \item $\mathbb{Z}_p = $ the $p$-adic ring = the completion of $\mathbb{Z}$ at the maximal ideal $p\mathbb{Z}$
    
    \item $\mathbb{Q}_p = $ the fraction field of $\mathbb{Z}_p$

    \item If $S$ is a finite set, $\#S  = $ number of elements of $S$

    \item If $f,g\in \mathcal{MP}_{n}(R)$ then $f\circ g$ denotes the composition $(f_1(g_1,\ldots,g_n),\ldots, f_n(g_1,\ldots,g_n))$
    
\end{itemize}

\section{Polynomial maps}\label{introd}

In this section, we survey some properties of polynomial maps and present an introduction to the Jacobian Conjecture. Complete proofs can be found  \cite{poly0}.

Let $R$ be a domain. In this  paper, by a polynomial map over $R$ we mean a $n$-tuple of polynomial $f = (f_1,\ldots,f_n)$ with $f_i\in R[x_1,\ldots,x_n]$. We denote by $\mathcal{MP}_{n}(R)$ the collection of polynomial maps over $R$. We have an identification $\mathcal{MP}_{n}(R) \cong R[x_1,\ldots,x_n]^{n}$. If $f = (f_1,\ldots,f_n) \in \mathcal{MP}_{n}(R)$ is a polynomial map, we say that $f$ is \textbf{Keller map} if the jacobian matrix of $f$:
$$
J_{f} = \begin{bmatrix}
        \frac{\partial f_{1}}{\partial x_{1}}& \frac{\partial f_{1}}{\partial x_{2}} & \cdots & \frac{\partial f_{1}}{\partial x_{n-1}} & \frac{\partial f_{1}}{\partial x_{n}}  \\
        \cdot & \cdot & \cdots  & \cdot & \cdot     \\
        \cdot & \cdot & \cdots  & \cdot & \cdot     \\
        \cdot & \cdot & \cdots & \cdot & \cdot     \\
          \frac{\partial f_{n}}{\partial x_{1}}& \frac{\partial f_{n}}{\partial x_{2}} & \cdots & \frac{\partial f_{n}}{\partial x_{n-1}} & \frac{\partial f_{n}}{\partial x_{n}}  
        \end{bmatrix} 
$$
is invertible. This is equivalent to say that $\det J_{F} \in R[x_1,\ldots,x_n]^{*} = R^{*}$. We say that \textbf{$f$ has jacobian $1$} if $\det J_{f} = 1$. Given $f\in \mathcal{MP}_{n}(R)$ we say that $f$ is invertible if there is $g \in \mathcal{MP}(R)$ such that $f\circ g = id = g\circ f$, where $id$ is the polynomial map $f = (x_1,\ldots,x_n)$. By simple derivation it follows that if $f$ is invertible then $\det J_{f} \in R[x_{1},\ldots,x_n]^{*} = R^{*}$ and by normalization we can assume $\det J_{f} = 1$. 

The Jacobian Conjecture asks for the converse when $R = \mathbb{C}$.

\begin{jacobian} Any polynomial map $f = (f_1,\ldots,f_n) \in \mathcal{MP}_{n} (\mathbb{C})$ with jacobian $1$ is an invertible polynomial map.
\end{jacobian}

When $R$ has characteristic $p>0$ the analog statement is false. We have polynomial maps of the form: $f = (x_1^{p}-x_{1},\ldots,x_{n}^{n}-x_{n}) \in \mathcal{MP}_{n}(\mathbb{F}_p)$. Then, $f$ is not invertible, since is not 
injective. 

\begin{dfn} Let $f = (f_1,\ldots,f_n) \in \mathcal{MP}_{n}(R)$ be a polynomial map. The \textbf{degree of $f$} is the integer: $$\deg(f):= \textbf{Max}\{\deg(f_1),\ldots, \deg(f_n)\}.$$
\end{dfn}

In the paper \textbf{The Jacobian Conjecture: Reduction of degree and formal expansion of the inverse},  Hyman Bass, Edwin H. Connell and
David Wright explored the conjecture and proved, in particular, that the Jacobian Conjecture is equivalent to the following conjecture (see \cite[Corollary 2.2]{poly1}).

\begin{jacobianii} Any polynomial map $f = (f_1,\ldots,f_n) \in \mathcal{MP}_{n}(\mathbb{C})$ with jacobian $1$ \textbf{and $\deg(f) \leq 3$} is invertible.
\end{jacobianii}
In the \cite[Corollary 2.2]{poly1} it is proved more: we can take $f = (x_1+h_1,\ldots, x_n+h_n)$, where the jacobian matrix of $h = (h_1,\ldots,h_n)$ is nilpotent and $\deg(h_i) \leq 3$ for every $i$.

We denote by $\textbf{Aut}_{n}(R)$ the group of polynomial maps that are invertible. If  $f \in \mathcal{MP}_{n}(R)$ and $R \subset S$ for some domain  $S$, we can look $f$ as polynomial map over $S$. We denote this map by $f\otimes S$, obtained by scalar extension.

We recall some facts about polynomial maps.
 
\begin{prop} Let $f \in \mathcal{MP}_{n}(R)$ and $S$ be a domain with $R \subset S$. Then

$$f\otimes S \in \mbox{\textbf{\mbox{Aut}}}(S) \Longleftrightarrow f \in \mbox{\textbf{Aut}}(R).$$
 
\end{prop}

\begin{proof} see \cite[Lemma 1.1.8]{poly0}

\end{proof}

\begin{thm} \textbf{(Cynk-Rusek)} \label{CR} 
Fix an algebraically closed field $\field$  of characteristic $p\geq 0$. Let $X
  \subset \mathbb{A}_{\field}^{n}$ be an affine variety
and $f\colon X \longrightarrow X$ a regular map. The following 
conditions are equivalent:

(i) $f$ is injective;

(ii) $f$ is a bijection;

(iii) $f$ is an automorphism.

\end{thm}

\begin{proof} 
See \cite[Theorem 2.2]{poly16},\cite[Theorem 4.2.1]{poly0} or \cite[Theorem 3.1]{serre2009use} for more details. We will give proof of the implication $(i) \Longrightarrow (ii)$. Suppose by contradiction that there is a non-surjective and injective polynomial map $f\colon X\longrightarrow X$. 

\textbf{Step 1:} Formulate the conditions of non-surjectivity, injectivity, and membership using polynomial equations.
 \vspace{0.2cm}

 \textbf{Step 2:} Let $\{\alpha_i\}_{i\in I}$ the list of coefficients that appear on the equations of Step 1 and consider the cases:
 \begin{itemize}
  \item  $\ch(\corpo) = p>0$: Let $R = \mathbb{F}_p[\{\alpha_i\}_{i\in I}]$ the $\mathbb{F}_p$-algebra obtained by adjuntion of the all coefficients $\{\alpha_i\}_{i\in I}$. Take
  $\mathfrak{m}\in \spm(R)$ a maximal ideal of $R$. By the Nullstellensatz we conclude that  $R/\mathfrak{m}$ is a finite extension of $\mathbb{F}_p$. So, by reducing the polynomial relation above we get a polynomial map $$f\otimes R/\mathfrak{m}\colon  \overline{X}(R/\mathfrak{m}) \longrightarrow \overline{X}(R/\mathfrak
  {m})$$
that is injective and not surjective. But, this is a contraction since $\#X(R/\mathfrak{m})$ is a finite set.
\vspace{0.3cm}
 \item  $\ch(\corpo) = 0$: Let $R = \mathbb{Z}[\{\alpha_i\}]$ be the subring of $\corpo$ generated by the coefficients $\{\alpha_i\}_{i\in I}$. Let $\mathfrak{m} \in \spm(R)$ be a maximal ideal of $R$. Since $R/\mathfrak{m}$ is a finite field (see \cite[\href{https://stacks.math.columbia.edu/tag/00GC}{Lemma 00GC}]{stacks-project}) we reduce this case to the first case where $\ch(\corpo)>0$.
 \end{itemize}

\end{proof}

\begin{prop} Let $\field$ be an algebraically closed field with $2\nmid \textbf{char}(\field)$. Then any polynomial map $f = (f_1,\ldots, f_n) \in \mathcal{MP}_{n}(\field)$ of degree $d \in\{1,2\}$ with jacobian $1$ is invertible.
\end{prop}

\begin{proof} If $d = 1$ we can assume by a translation that $f_i$ is homogeneous of degree $1$ for every $i$. So, by linear algebra, $f$ is invertible if and only the Jacobian matrix is invertible. Now, assume $d = 2$. We follow the argument in \cite{poly1}. By the Theorem \ref{CR} it is sufficient to show that $f$ is injective. Suppose, by contradiction, that $f$ is not injective. We can assume that $(0,\ldots,0) = f(0,\ldots, 0) = f(h_1,\ldots, h_n)$ for some $h \in \mathbb{C}^{n}$ non-zero. Take $c = 1/2$ and write $f = f_1+f_2$ the homogeneous decomposition of $f$. Note that
$$
        0 =f_1(h)+2\cdot c\cdot f_2(h) = \frac{\partial[Tf_1(h)+T^{2}f_2(h)]}{\partial T}|_{T=c} = \frac{\partial f(Th)}{\partial T}|_{T=c}  = J_{f}(c\cdot h)\cdot h
    $$
a contradiction with the jacobian condition: $\det J_f = 1$. 
\end{proof}

\begin{lemma} \label{injeta} Let $\alpha_{1},...,\alpha_{n} \in \overline{\mathbb{Q}}$. 
Then for almost all primes $p \in \mathbb{Z}$
there is an injection of rings
$$\phi_{p}\colon \mathbb{Z}[\alpha_{1},...,\alpha_{n}] \hookrightarrow \mathbb{Z}_{p}.$$
\end{lemma}

\begin{proof} See \cite[Theorem 10.3.1]{poly0}
\end{proof}

\begin{Hlemma} Let $(R,\mathfrak{m},k)$ be a complete discrete valuation ring and 
$f_{1}, \ldots , f_{n} \in R[x_{1},\ldots ,x_{n}]$. Suppose that there is
$\alpha = (\alpha_{1},\dots ,\alpha_{n}) \in R^{n}$ such that
$$f_{1}(\alpha_{1},\dots ,\alpha_{n}) \equiv \cdots \equiv f_{n}(\alpha_{1},\dots ,\alpha_{n}) \equiv 0 
\mod \mathfrak{m}^{2m+1}$$
\noindent
where $m$ is the integer such that $\det J_f(\alpha) \in \mathfrak{m}^{m}-\mathfrak{m}^{m+1}$. Then there is a unique 
$\beta = (\beta_{1},\dots ,\beta_{n}) \in R^{n}$ such that 
$f_{1}(\beta) = \cdots = f_{n}(\beta) = 0$ and $\beta_{i} \equiv \alpha_{i} \mod \mathfrak{m}^{m+1}$
for all $i = 1,\dots ,n$.
\end{Hlemma}

\begin{proof} See \cite[proposition 5.20]{poly4}.
\end{proof}

By Hensel's lemma, we get the following

\begin{prop} \label{Ahensel} Let $(R,\mathfrak{m},\field)$ be a complete discrete valuation ring and let $f = (f_1,\ldots,f_n)$ be a
Keller map over $R$. If $A$ is an $R$-algebra denote by 
$X(A)$ the set of $A$-points of the affine scheme
$\textbf{\mbox{Spec}}(R[x_{1},\ldots ,x_{n}]/
\idd{f_{1},\dots ,f_{n}}
)$. Then there is a bijection $X(R) \cong X(k)$.
\end{prop}

\begin{proof} As $f$ is Keller map we have $Jf(u)$ invertible matrix for every $u = (u_1,\ldots,u_n) 
\in R^{n}$. The bijection is natural: given $u \in R^{n}$ we define $\varphi(u) \in X(k)$
the $k$-point obtained by reduction modulo $\mathfrak{m}$. The Hensel Lemma implies that the projection map $\pi \colon X(R) \longrightarrow X(k)$
is a bijection: injectivity by the uniqueness and
surjectivity by lifting points.   
\end{proof}

\section{Unimodular and invariant domains}\label{uidomains}

Let $(R,\mathfrak{m},\field)$ be a local ring. Given a polynomial map $f = (f_1,\ldots,f_n) \in \mathcal{MP}_{n}(R)$ we denote by $\overline{f} = f\otimes \field \in
\mathcal{MP}_{n}(\field)$ the induced map over the residue
field $\field$ via reduction modulo $\mathfrak{m}$.

\begin{dfn}  We say that 
$R$ is a \textbf{unimodular domain} if the following holds:
\begin{itemize}
    \item given $f_1,\ldots,f_n \in R[x_1,\ldots,x_n]$ with jacobian $1$ consider the scheme: $$X = \Spec(R[x_1,\ldots,x_n]/\idd{f_1,\ldots,f_n})$$
    Then $X(\field) \neq \mathbb{A}_{\field}^{n}$. 
\end{itemize}
We say that a polynomial map $f \in
\mathcal{MP}_{n}(R)$ is \textbf{unimodular} if it
satisfies the condition above.
\end{dfn}

\begin{OBS} If $f = (f_1,\ldots,f_n) \in \mathcal{MP}_n(R)$ is Keller polynomial map with $c = \det J_{f} \in R^{*}$ then $g = (c^{-1}f_1,f_2,\ldots,f_n)$ is a Keller map with jacobian $1$. In particular, if $(R,\mathfrak{m},\field)$ is an unimodular domain then any Keller polynomial map is a unimodular map.
 \end{OBS}

\begin{OBS} If $R$ is a complete discrete valuation ring with finite residue field then the unimodularity condition is equivalent to $\#X(R)<\#\field^{n}$. Indeed, by the Proposition \ref{Ahensel} we know that $X(R) = X(\field)$. In particular, $\#X(R) = \#X(\field)$.
\end{OBS}

\begin{prop}\label{casoinfinito} Let $(R,\mathfrak{m},\field)$  be a
 local domain with $\field$ an infinite field. Then $R$ is an unimodular domain.
\end{prop}

\begin{proof} Let $f \in \mathcal{MP}_{n}(R)$ be a Keller map. Let $\overline{f} = f\otimes \field \in \mathcal{MP}_{n}(\field)$  be the
induced map over the residue field and suppose that $f(\alpha) = 0$ for all 
$\alpha \in \field^{n}$. Since $\field$ is infinite we have $\overline{f} \equiv 0$. In particular, the coefficients in $f$ are in the maximal ideal $\mathfrak{m}$. 
In particular, $\det Jf \in\mathfrak{m}[x_{1},\ldots,x_{n}]$, a contradiction by jacobian condition: $\det J_f \in R^{*}$.
\end{proof}

We recall the following proposition (see \cite[Proposition 8]{poly2}).

\begin{prop}\label{UJC} Suppose that the Jacobian Conjecture is true.  Then every local domain $R$ of characteristic zero is unimodular.
\end{prop}

\begin{proof} Let $f \in \mathcal{MP}_{n}(R)$ be a Keller map defined over a local domain $R$ of characteristic zero. Since we assume the
Jacobian Conjecture is true over $\mathbb{C}$ 
we have $f$ an invertible map over $R$ (see \cite[lemma 1.1.14]{poly0}.
In particular, there is 
$g \in \mathcal{MP}_{n}(R)$ such that $f\circ g
= x = (x_1,\ldots,x_n)$.  
By the reduction modulo $\mathfrak{m}$ we conclude that the map $\overline{f} \in \mathcal{MP}_{n}(\field)$ 
is a bijection, in particular, a non-zero map.
\end{proof}

The Unimodular Conjecture is false for local domains of characteristic $p > 0$ and finite residue field. 
  
\begin{ex} Consider the local domain $(\mathbb{F}_{p}[[t]],t\mathbb{F}_{p}[[t]],\mathbb{F}_{p})$ and 
take the polynomial map $f = (x_{1}-x_{1}^{p},\dots ,x_{n}-x_{n}^{p}) \in \mathcal{MP}_{n}(\mathbb{F}_{p}[[t]])$. Note that $f$ is a Keller map but the induced map over the residue field is the zero map because $\alpha^{p} = \alpha$ for every $\alpha \in \mathbb{F}_{p}$.
\end{ex}

\begin{OBS}
Let $(R,\mathfrak{m},\field)$  be a local domain. The following table shows the complete set of relations between $\ch(R)$ and $\ch(\field)$.

\begin{center}
    \begin{tabular}{|c|c|c|c| }
    \hline
    \textbf{$\ch(R)$} & \textbf{$\ch(\field)$} & $\#\field$ & $\mbox{\textbf{type}}$ \\ \hline
    $p = 0$ & $q > 0$  & $\mbox{$\infty$}$ & $\mbox{unimodular} $ \\ \hline
    $p = 0$ & $q > 0$ & $<\mbox{$\infty$}$ & $\mbox{\textbf{unknown}}$ \\  \hline
    $p = 0$ & $q = 0$ & $ \mbox{$\infty$}$ & $\mbox{unimodular} $ \\ \hline
    $p>0$ & $q = p$ & $<\mbox{$\infty$}$  & $\mbox{non-unimodular} $\\ \hline
    $p>0$ & $q = p$ & $\mbox{$\infty$}$  & $\mbox{unimodular} $\\ 
    \hline
    \end{tabular}
\end{center}
\end{OBS}

\subsection{Invariance and unimodularity} Now we will study when the unimodularity of a polynomial map is preserved when we make operations and composition with the map.

\begin{dfn} \label{invvv} Let $R$  be a local domain with of characteristic zero and $f \in \mathcal{MP}_{n}(R)$ be a unimodular map. We say that $f$ is an \textbf{invariant map} if it satisfies the following:
\begin{itemize}
    \item Let $a\in R^{n}$ and $g \in \mathcal{MP}_{n}(R)$ 
be a \textbf{Keller affine automorphism}, that is, $g = AX+b$ with
$$A = \begin{bmatrix}
        a_{11}& a_{12} & \cdots & a_{1(n-1)} & a_{1n}  \\
        \cdot & \cdot & \cdots  & \cdot & \cdot     \\
        \cdot & \cdot & \cdots  & \cdot & \cdot     \\
        \cdot & \cdot & \cdots & \cdot & \cdot     \\
          a_{n1}& a_{n2} & \cdots & a_{n(n-1)} & a_{nn}  
        \end{bmatrix}$$
having $\det A = 1$ and $b =(b_1,\ldots,b_n)$.  Then $f\circ g\circ f$ and $f-f(a) = (f_1-f_1(a),\ldots,f_n-f_n(a))$ are unimodular maps for every $a \in R^{n}$.
\end{itemize}

\end{dfn}

Note that in the Definition \ref{invvv} we ask the unimodular property to be invariant under translation and 
composition of a special type. Note also that, as in the unimodular case, if the residue field $\field$ is infinite then any Keller unimodular map is an invariant map.
  
\begin{prop}\label{uinvariant} Let $R$  be a local unimodular domain. Then $R$ is invariant.
\end{prop}

\begin{proof} By hypothesis a Keller map $f \in
  \mathcal{MP}_{n}(R)$ is
  unimodular. Since the Keller condition is 
invariant under composition and translation we have the result.
\end{proof}

\begin{dfn} Let $R$ be a local domain. 
Given a map $f \in \mathcal{MP}_{n}(R)$ we say that 
\begin{itemize}

 \item $f$ is \textbf{strongly invariant} if it is invariant and for all \textbf{Keller affine automorphisms}   
$g_{1},\dots ,g_{k} \in \mathcal{MP}_{n}(R)$ the map $f_{1}\circ f_{2} \circ f_{3}\circ \cdots \circ f_{k}$ is
invariant where $f_{j} = g_{j}\circ f$.

\end{itemize}

The domain $R$ is called an \textbf{invariant domain} if every unimodular polynomial map (in dimension $n > 1$) is invariant. 
\end{dfn}

\begin{lemma} If a map $f \in \mathcal{MP}_{n}(R)$ is strongly invariant then $f\circ g\circ f$ is strongly invariant for all Keller affine automorphism $g \in \mathcal{MP}_{n}(R)$.
\end{lemma}

\begin{proof} Indeed, by induction, it is sufficient to consider the case $k = 2$. For this, 
let $g_{1},g_{2} \in \mathcal{MP}_{n}(R)$ be a Keller affine automorphism and observe that 
$g_{1}\circ (f\circ g\circ f)\circ g_{2}\circ (f\circ g\circ f) = 
(g_{1}\circ f)\circ (g\circ f)\circ (g_{2}\circ f)\circ (g\circ f)$. So it is invariant by hypothesis on $f$.
\end{proof}
  
The next example shows that the condition $\ch(R) = 0$ is important.

\begin{ex} Let $f_{1},\dots ,f_{n} \in
  \mathbb{F}_{p}[[t]][x_{1},\dots ,x_{n}]$ be defined by 
$f_{j} = 1-x_{j}^{p}+x_{j}$ and consider the polynomial map $f = (f_{1},\dots ,f_{n}) \in \mathcal{MP}_{n}(\mathbb{F}_{p}[[t]])$. It is easy to check that $\det J_f = 1$ and that $f$ is an unimodular map.  But,
$$f - f(1,\dots ,1) = (-x_{1}^{p}+x_{1},\dots ,-x_{n}^{p}+x_{n}).$$ 
So, in the case $R = \mathbb{F}_p[[t]]$ it follows that the property of invariance by translation is false. 
\end{ex}
  
\begin{ex} Let $g(x) \in \mathbb{F}_{p}[x]$  be a polynomial that maps $\{0,\dots ,p-2\} \mapsto p-1$ and $p-1 \mapsto 0$. 
For example, take $p = 5$ and consider 
$$g(x) = -1+x-x^{2}+x^{3}-x^{4} \in \mathbb{F}_{5}[x].$$
It is easy to check that $g\circ g = 0$. Note that $g(0) \neq 0$.  Define the polynomial map 
$f = (f_{1},\dots ,f_{n}) \in \mathcal{MP}_{n}(\mathbb{F}_{p}[[t]])$ with $f_{j} = x_{j}-x_{j}^{p}+g(x_{j}^{p})$. 
We have $f$ a Keller map with the induced map over the residue field non-zero. But by construction, we have $f\circ f = 0$. 
Thus, in characteristic $p > 0$ the invariance by composition is false. 
\end{ex}

In the next theorem, the argument is similar to the argument given in \cite[Theorem 4]{poly2} with the observation that it is sufficient to require the invariance property.

\begin{thm} \label{fiber2} Let
  $(R,\mathfrak{m},\field)$
 be a complete discrete valuation ring with finite
 residue field $\field$. Let $f \in \mathcal{MP}_{n}(R)$  be a strongly invariant map. Then $f$ is injective.
 \end{thm}

\begin{proof} Suppose, by contradiction that there is a a strongly
  invariant polynomial map $f = (f_1,\ldots,f_n) \in \mathcal{MP}_{n}(R)$
with $f(a_{1}) = \cdots = f(a_{m}) = c$ ($m>1$) for some $a_{1},\dots ,a_{m} \in R^n$ with
$a_{i} \neq a_{j}$, if $i\neq j$. We will show that there is a strongly invariant map $g$ with $\#g^{-1}(c) > m$. 
By iteration we will get a Keller map $\widetilde{g} \in \mathcal{MP}_{n}(R)$ with $\#\widetilde{g}^{-1}(c) > (\#k)^{n}$ 
a contradiction by Proposition \ref{Ahensel}.

Since $f(a_{1}) = f(a_{2})$ we have $\idd{a_{2}-a_{1}}
= R$ by the \cite[Lemma 10.3.11]{poly0}). On the other hand, since
$f$ is an invariant map we guarantee that there exists $b \in R^{n}$ 
such that $f(b)-f(a_{1})$ is unimodular, that is, 
$\idd{f(b)-f(a_{1}} = R$. In particular,
$\idd{a_{2}-a_{1}} = \idd{f(b)-f(a_{1})}= \idd{f(b)-c} = R$. So, we have 
$\{a_{2},a_{1}\} \cong \{f(b),c\}$ (see \cite[Transitivity, Proposition 1]{poly2}). 
By \cite[Theorem 2]{poly2} we know that there is $h \in \mathcal{MP}_{n}(R)$, Keller
affine automorphism such that $h(c) = a_{1}$ and $h(f(b)) = a_{2}$. Now define $g = f\circ h\circ f$.  Then the map $g$ is strongly invariant map with  $g(a_{j}) = f(h(c)) = f(a_{1}) = c$ for all $j$ and
$h(b) = f(h(f(b))) =f(a_{2}) = c$.  Note that $b \neq
a_{j}$ for all $j$.
\end{proof}

As a consequence, we get the following result.

\begin{cor}\label{fiber3} Let $(R,\mathfrak{m},\field)$  be a complete discrete valuation ring with 
a finite residue field $\field$. Suppose that $R$ is an invariant domain. Then any unimodular Keller polynomial map 
$f \in \mathcal{MP}_{n}(R)$ is an injective map.\end{cor}

\begin{dfn} Pick $d \in \mathbb{Z}_{\geq 1}$ and let $(R,\mathfrak{m},\field)$ be a local domain. We say that $R$ is a $d$-unimodular map if any Keller map $f = (f_1, \ldots, f_n)\in \mathcal{MP}_{n}(R)$ with $\deg(f_i)\leq d$, \textbf{for some $i$}, is unimodular.
\end{dfn}

Note that any local domain $R$ is
$1$-unimodular and $R$ is a 
unimodular domain if and only if it 
is $d$-unimodular for all $d \in \mathbb{N}$. 
If $R$ is $d$-unimodular then it is $e$-unimodular
for all $e \leq d$. 

If $R$ has characteristic $p>0$ and $\field$ is finite then  $R$ is not $d$-unimodular 
for infinitely many $d \in \mathbb{Z}$. Indeed, for each 
$m \in \mathbb{N}$ take $d = (\#k)^{m}$ and consider the map $f = (x_{1}-x_{1}^{d},\dots ,x_{n}-x_{n}^{d}) \in \mathcal{MP}_{n}(R)$.

\begin{prop} \label{menor} Let $f \in \mathcal{MP}_{n}(\mathbb{Z})$ be a non-constant polynomial map. Then for almost all primes $p \in \mathbb{Z}$ we have $F\otimes \mathbb{Z}_{p}$ unimodular map over $\mathbb{Z}_{p}$.
\end{prop}

\begin{proof} Indeed, suppose $f_{1}(x_{1},\dots ,x_{n}) \in \mathbb{Z}[x_{1},\dots ,x_{n}] \setminus \mathbb{Z}$.
We can choose $d \in \mathbb{Z}^{n}$ such that $f_{1}(d) \neq 0$.  Note that $f_{1}(d) \in \mathbb{Z}_{p}^{*}$ for all 
$p$ such that $p \nmid f_{1}(d)$.
\end{proof}
  
It is known that to prove the Jacobian Conjecture it is sufficient to consider polynomial maps of Druzkowski type, that is, maps in the form $f = x+h$ with $h_{j} = (\sum_{k}a_{kj}x_{k})^{3}$ and $Jh$ nilpotent (see \cite[Theorem 6.3.2]{poly0}).  We call maps of the form $f = x+h$ with $h = \sum_{k}a_{kj}x_{k}^{3}$ quasi-Druzkowski maps.

\begin{prop} Quasi-Druzkowski maps are unimodular over $\mathbb{Z}_p$.
\end{prop}

\begin{proof} Let $f$ be a quasi-Druzkowski map with $h = (h_{1},\dots ,h_{n})$ where $h_{j} =
\sum_{k}b_{kj}x_{k}^{3}$.  We will show that there exist $u_{1},\dots ,u_{n} \in \mathbb{Z}_{p}$ non-zero such that

$$u_{1}h_{1}(x_{1},\dots ,x_{n})+\cdots+u_{n}h_{n}(x_{1},\dots ,x_{n}) = 0.$$

Indeed, for this, it is sufficient to find a non-trivial solution for the homogeneous system:
$$u_{1}b_{11}+u_{2}b_{12}+\cdots+u_{n}b_{1n} = u_{1}b_{21}+u_{2}b_{22}+\cdots+u_{n}b_{2n} = \cdots =u_{1}b_{n1}+u_{2}b_{n2}
+\cdots+u_{n}b_{nn} = 0.$$
Now since $JH$ is nilpotent we have, in particular, $\det(b_{ij}) = 0$ and so  there is a non-trivial
solution
$(u_{1},\dots ,u_{n})\in \mathbb{Q}_{p}^{n}$ 
for the system above. Without loss of generality we can suppose that 
$u_{1}\in \mathbb{Z}_{p}^{*}$ and $u_{j} \in \mathbb{Z}_{p}$, if $j > 1$. Now consider
$s = u_{1}+u_{2}p\cdots+u_{n}p \in
\mathbb{Z}_{p}^{*}$. Note that, $(1,p,\dots ,p) \in
\mathbb{Z}_{p}^{n}$ is such that $(f_{1}(1,p,\dots ,p),\dots ,f_{n}(1,p,\dots ,p)) = \mathbb{Z}_{p}.$
\end{proof}

\begin{OBS}
It was seen in the previous section that there are local domains $(\mathcal{O}, \mathcal{M},k)$ 
with characteristic $p>0$ that are not unimodular
domains. On the other hand, we know that any local domain with an infinite residue field is indeed 
an unimodular domain. In particular, 
if we consider the map $f = (x_{1}-x_{1}^{p},\dots ,x_{n}-x_{n}^{p})$ over  
$(\overline{\mathbb{F}}_{p}[[T]],T\overline{\mathbb{F}}_{p}[[T]],\overline{\mathbb{F}}_{p})$  
we have $\overline{F}(\alpha) \neq 0$ for some 
$\alpha \in \overline{\mathbb{F}}_{p}$.
So, if we take, $L$, the field obtained by adjunction of $\alpha$ to $\mathbb{F}_{p}$ we see that our 
$f$ is unimodular over the local domain $(L[[T]],TL[[T]], L)$. 
\end{OBS}

For the $p$-adic case, there is an analog:

\begin{thm} \label{engordando} Let $f \in \mathcal{MP}_{n}(\mathbb{Z}_{p})$  be a Keller polynomial map. Then
there is a complete discrete valuation ring $(\mathcal{O},\mathcal{M}, \field)$ that dominates $\mathbb{Z}_{p}$ 
such that $f\otimes \mathcal{O}$ is a unimodular map. Furthermore, $\mathcal{O}$ is a free $\mathbb{Z}_{p}$-module  with $\textbf{\mbox{rank}}_{\mathbb{Z}_{p}}(\mathcal{O}) = [\field:\mathbb{F}_{p}].$
\end{thm}

\begin{proof}  Consider the map $\overline{f} \in \mathcal{MP}_{n}(\mathbb{F}_{p})$ induced over the residue field. By the previous remark we know that there exists $\alpha \in \overline{\mathbb{F}_{p}}^{n}$ such that $\overline{f}(\alpha) \neq 0$. By taking the field $\field = \mathbb{F}_{p}(\alpha_{1},\dots ,\alpha_{n})$ 
obtained by adjunction we can look $\overline{f}$ as a
polynomial map which is \textbf{non zero} over $\field$. 
Now we recall the following theorem about unramified extensions of a local field $L$ (\cite[Proposition 7.50]{poly6})

\begin{thmo} Let $L$  be a local field with residue field $l$. There exists a 1-1 correspondence
between the following sets
$$\mbox{ $\{$finite extensions  unramified of $L$ $\}$ } \cong \mbox{ $\{$finite extensions of $l$$\}$}$$
\noindent
given by $L' \mapsto l'$, where $l'$ is the residue field associated to $L'$.  Furthermore, in this correspondence,
we have $[L':L] = [l':l]$.
\end{thmo}
Applying the theorem above to $L = \mathbb{Q}_{p}$  with
$l = \mathbb{F}_{p}$ we see that the extension
$\field|\mathbb{F}_{p}$ corresponds to a local field $K|\mathbb{Q}_{p}$ such that $\field$ is the residue field of $K$. 
Denote by $(\mathcal{O}, \mathcal{M},k)$ the ring of integers of $K$. 
The ring $\mathcal{O}$ is the integral closure of $\mathbb{Z}_{p}$ in $K$ and by 
\cite[Proposition 5.17]{poly7}) we know 
that $\mathcal{O}$ is a free $\mathbb{Z}_{p}$-module and $rank_{\mathbb{Z}_{p}}(\mathcal{O}) = [K:\mathbb{Q}_{p}] = 
[\field:\mathbb{F}_{p}]$. So $f\otimes \mathcal{O} \in \mathcal{MP}_{n}(\mathcal{O})$ is a Keller map with a non-zero induced map over the residue field. 
\end{proof}

\begin{lemma}\label{decendo3} Let $K|\mathbb{Q}_{p}$ be a finite Galois extension with $ m = [K:\mathbb{Q}_{p}]>1$. Let
$\mathcal{O}_{K}$ be the integral closure of $\mathbb{Z}$ in
$K$. Let $f \in \mathcal{MP}_{n}(\mathcal{O}_{K})$ be a
 non-injective Keller unimodular map. Then there exists a non-injective Keller unimodular map $g \in \mathcal{MP}_{mn}(\mathbb{Z}_{p})$.
\end{lemma}

\begin{proof} The same argument of \cite[A Galois descent]{poly0} works. The relevant fact is that $\mathcal{O}_{K}$
is a free $\mathbb{Z}_{p}$-module of $\textbf{\mbox{rank}}_{\mathbb{Z}_{p}}(\mathcal{O}_{K}) = m$.
\end{proof}



By Theorem \ref{fiber2} we know that if $\mathbb{Z}_{p}$ is an unimodular domain then any Keller map $f \in \mathcal{MP}_n(\mathbb{Z}_{p})$ is injective. We can show a more general result

\begin{thm} Assume that $\mathbb{Z}_{p}$ is an invariant domain for some prime $p$. Then for all Keller unimodular maps $f \in \mathcal{MP}_{n}(\mathbb{Z}_{p})$ and $K|\mathbb{Q}_{p}$ finite extension we have $f\otimes \mathcal{O}_{K}$ is an injective map.
\end{thm}

\begin{proof} It is sufficient to show that $f\otimes \mathcal{O}$ is an injective map where 
$\mathcal{O}$ denotes the integral closure of $\mathbb{Z}_{p}$ in $\overline{\mathbb{Q}_{p}}$. 
Indeed, if $\alpha \neq \beta \in \mathcal{O}^n$ are such that $f(\alpha) = f(\beta)$ consider
the ring $R = \mathbb{Z}_{p}[\alpha,\beta]$ obtained by adjunction of $\alpha$ and $\beta$ and let $K$ the fraction field of $R$. 
The extension $K|\mathbb{Q}_{p}$ is finite and $\alpha_{1},\dots ,\alpha_{n},\beta_{1},\dots ,\beta_{n} \in K$. Note that 
$\alpha_{i},\beta_{i} \in \mathcal{O}_{K}$ for all $i$. Without loss of generality, we can suppose that $K|\mathbb{Q}_{p}$ is a Galois extension. So, $f\otimes \mathcal{O}_{K}$ is a Keller unimodular map over $\mathcal{O}_{K}$ and non-injective. By the Lemma \ref{decendo3}, we get $g \in \mathcal{MP}_{N}(\mathbb{Z}_{p})$ a Keller unimodular 
map non-injective, for some $N\in\mathbb{N}$. Now, since we are assuming that $\mathbb{Z}_{p}$ is an
invariant domain and by Theorem \autoref{fiber2} we know that $g$ is an injective map. A contradiction. 
\end{proof}

\subsection{$\mathbb{F}_p$-points of hypersurfaces and unimodularity}

We start this subsection by reminding the following result about $\mathbb{F}_p$-points of affine hypersurfaces in the affine space $\mathbb{A}_{\overline{\mathbb{F}}_p}^{n}$ (see \cite[Corollary 2.7]{ghorpade2019note})

\begin{thm} Let $X = \mathcal{Z}(f) \subset \mathbb{A}_{\overline{\mathbb{F}}_p}^{n}$ be an affine hypersurface. Consider the set of $\mathbb{F}^{n}$-points of $X$, that is, 
$$
X(\mathbb{F}_p) = \{(\alpha_1,\ldots,\alpha_n) \in \mathbb{A}_{\overline{\mathbb{F}}_p}^{n}\mid f(\alpha_1, \ldots,\alpha_n) = 0 \}\cap \mathbb{F}_p^{n}.
$$
Then $\#X(\mathbb{F}_p)\leq \deg(f)p^{n-1}$.
\end{thm}

Using this result we can easily proof that the $p$-adic ring $\mathbb{Z}_{p}$ is $(p-1)$-unimodular. 

\begin{prop}\label{fino} Let $(R, \mathfrak{m},\field)$ be a complete discrete valuation ring with residue field $\field$ finite. Then, any Keller polynomial map $f = (f_1,\ldots, f_n) \in \mathcal{MP}_{n}(R)$ is $(\#\field-1)$-unimodular if $\deg(f_i)<\#k$, for some $i$.
\end{prop}

\begin{proof} Let $X$ be the scheme defined by the equations $f_1,\ldots,f_n.$ First, consider the algebraic set $\overline{X} = \{(x_1,\ldots,x_n) \in \mathbb{A}_{\overline{\mathbb{F}}_p}^{n}\mid \overline{f}_1(x) = \cdots = \overline{f}_n(x) = 0\}$. Note that the hypersurface $\mathcal{Z}(f_i) \subset \mathbb{A}_{\overline{\mathbb{F}}_p}^{n}$ contains $\overline{X}$. In particular, $\#\overline{X}(\mathbb{F}_p)\leq \#\mathcal{Z}(f_i)(\field) \leq \deg(f_i)\#\field^{n-1}$. Now, by the Proposition \ref{Ahensel} we conclude that 
$$
\#X(R) = \#\overline{X}(\field) \leq \deg(f_i)\#\field^{n-1}<\#\field^{n}.
$$
This finishes the proof.
\end{proof}

\begin{cor} For all prime $p$,  
$\mathbb{F}_{p}[[T]]$ and $\mathbb{Z}_{p}$ are
$(p-1)$-unimodular domains .
\end{cor}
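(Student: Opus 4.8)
The plan is to deduce this directly from the preceding theorem, which asserts that any local domain $(\mathcal{O},\mathcal{M},k)$ with $q := \#k < \infty$ is $(q-1)$-unimodular. Thus the entire content of the corollary reduces to identifying, for each of the two rings, the residue field and its cardinality. No new argument about Keller maps, the zero-dimensionality of the fiber, or the B\'ezout inequality is needed; those have all been absorbed into the theorem I intend to quote.

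First I would treat $\mathbb{F}_{p}[[T]]$. This is the formal power series ring over $\mathbb{F}_{p}$, a complete discrete valuation ring (hence in particular a local domain) whose maximal ideal is $\mathcal{M} = T\mathbb{F}_{p}[[T]]$; the residue field is $\mathbb{F}_{p}[[T]]/T\mathbb{F}_{p}[[T]] \cong \mathbb{F}_{p}$, so that $q = \#k = p < \infty$. Applying the theorem with this value of $q$ gives that $\mathbb{F}_{p}[[T]]$ is $(q-1) = (p-1)$-unimodular.

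Next I would treat $\mathbb{Z}_{p}$ in exactly the same fashion. The ring of $p$-adic integers is a complete discrete valuation ring with maximal ideal $\mathcal{M} = p\mathbb{Z}_{p}$ and residue field $\mathbb{Z}_{p}/p\mathbb{Z}_{p} \cong \mathbb{F}_{p}$, again of cardinality $q = p < \infty$. The theorem then yields at once that $\mathbb{Z}_{p}$ is $(p-1)$-unimodular, completing the argument.

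Since both conclusions are immediate specializations of the general theorem, there is essentially no obstacle here: the only point deserving attention is the (standard) identification of each residue field with $\mathbb{F}_{p}$, which pins down $q = p$ and hence $q-1 = p-1$. I would state this identification explicitly, precisely so that the reader sees that the common bound $p-1$ in the two cases is not a coincidence but reflects the fact that $\mathbb{F}_{p}[[T]]$ and $\mathbb{Z}_{p}$ share the residue field $\mathbb{F}_{p}$.
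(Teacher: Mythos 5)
Your proposal is correct and is exactly the argument the paper intends: the corollary is stated as an immediate consequence of the preceding theorem, and your explicit identification of the residue fields $\mathbb{F}_{p}[[T]]/T\mathbb{F}_{p}[[T]] \cong \mathbb{F}_{p}$ and $\mathbb{Z}_{p}/p\mathbb{Z}_{p} \cong \mathbb{F}_{p}$ (so $q = p$ in both cases) is precisely the specialization needed. The paper in fact omits any written proof for this corollary, so spelling out the residue-field computation as you do is, if anything, slightly more complete than the original.
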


Note that the bound $p-1$ is ``maximal'' for $\mathbb{F}_{p}[[T]]$.
  
\begin{prop} 
Let $p \in \mathbb{Z}$ be a prime. For each $d \in \mathbb{Z}_{\geq 1}$ we can find a finite extension  $K|\mathbb{Q}_{p}$ such that the ring of integers
$\mathcal{O}_{K}$ is a $d$-unimodular domain.
\end{prop}

\begin{proof} Let $d \in \mathbb{N}$. If $d = 1$ just take $K = \mathbb{Q}_{p}$.
Suppose that $d > 1$. We know that for any Keller polynomial map $f  \in \mathcal{MP}_{n}(\mathbb{Z}_{p})$ of degree $d$ we have 
$$\#\mathcal{Z}(f_1,\dots,f_n) \leq \deg(f)^{n} = d^{n}$$
 \noindent
where $\mathcal{Z}(f_1,\dots,f_n)$ is the algebraic set in
$\mathbb{A}_{\overline{\mathbb{F}}_{p}}^{n}$ given by reduction of $f \mod p$. Let $n$ be an integer such that $p^{n} > d$ and fix  $\mathbb{F}_{p^{n}}$ the unique extension of $\mathbb{F}_{p}$ of degree
$n$ in $\overline{\mathbb{F}}_{p}$. We have seen in the proof of Theorem \autoref{engordando}  that there is a finite extension $K|\mathbb{Q}_{p}$ such that the residue field of $\mathcal{O}_{K}$ is  $\mathbb{F}_{p^{n}}$. By construction, for all Keller map $g \in \mathcal{MP}_{n}(\mathcal{O}_{K})$ with $\deg(g) \leq d$ we have $\#\{g_{1} = \cdots = g_{n} = 0\} \leq 
\deg(g)^{n}\leq d^{n} < (p^{n})^{n}$. So $g$ is an unimodular map and $\mathcal{O}_{K}$ is a $d$-invariant domain.
\end{proof}

\begin{prop} Suppose that for all $n \in
  \mathbb{N}$ and all 
$f = (f_{1},\dots ,f_{n}) \in \mathcal{MP}_{n}(\mathbb{Z}_{p})$ Keller map with
$\deg(f) < n$ is unimodular. Then $\mathbb{Z}_{p}$ is an unimodular domain.
\end{prop}

\begin{proof} Let $f \in
  \mathcal{MP}_{n}(\mathbb{Z}_{p})$ be a Keller map with 
$n \leq \deg(f)$. Let $m \in \mathbb{Z}$ be an integer (to be determined) and consider the map 
$$f^{[[m]]} = (f_{1},\dots ,f_{n},f_{1},\dots ,f_{n},\dots ,f_{1},\dots ,f_{n}) \in \mathcal{MP}_{mn}(\mathbb{Z}_{p})$$
\noindent
which consists of $m$-repetitions of the tuple $f_{1},\dots ,f_{n}$ where each occurrence of such tuple we introduce
 $n$-distinct variables. By construction we have
 $f^{[[m]]}$ a Keller map and $f$ is a unimodular map if
 and only if  so is $f^{[[m]]}$. We can choose large $m$ such that
 $\deg(f) < mn$. Thus, we get the unimodularity of $f$.
\end{proof}


Let $R$ be a domain and $f \in R[x_{1},\dots ,x_{n}]$. Define $d(f):=$ the number of monomials in degree $>3$ that occur in  $f$. If $f = (f_{1},\dots ,f_{n}) \in \mathcal{MP}_{n}(R)$ we define $d(F) := \sum_{j}d(f_{j})$.

\begin{prop} Let $p \in \mathbb{Z}_{>3}$ be a prime number and $f \in \mathcal{MP}_{n}(\mathbb{Z}_{p})$ a Keller map. 
Suppose that 
$$d(f) \leq log(2)^{-1}log(nlog(p/3)/log(3))\qquad (*)$$
where $log$ is the natural logarithm. Then $f$ is unimodular. 
\end{prop}

\begin{proof} Let $f \in
  \mathcal{MP}_{n}(\mathbb{Z}_{p})$ be a Keller map. By the
 Reduction Theorem (see \cite[(Proposition 3.1]{poly1} we can find invertible
maps $g, h \in \mathcal{MP}_{n+m}(\mathbb{Z}_{p})$ for  some $m \in \mathbb{N}$ such that $G := g\circ f^{[m]}\circ h$
has degree $\leq 3$ where $f^{[m]} = (f,x_{n+1},\dots ,x_{n+m})$. Furthermore, we know that $g(0) = h(0) = 0$. Denote by 
$X_{f}(\mathbb{Z}_{p})$ and $X_{G}(\mathbb{Z}_{p})$ the  set of $\mathbb{Z}_{p}$-points of $f$ and $G$ respectively.
It is easy to check that $\#X_{f}(\mathbb{Z}_{p}) =   \#X_{G}(\mathbb{Z}_{p})$. Now, since $\mathbb{Z}_{p}$
is a $3$-unimodular domain we have
$\#X_{G}(\mathbb{Z}_{p}) < 3^{n+m}$ and we get $m = 2^{d(f)}$ by the proof of reduction theorem. The inequality $(*)$ implies $3^{m+n}\leq p^{n}$ and so we have $f$ a unimodular map.
 \end{proof}

\begin{thm}$\mathbb{Z}_{p}$ is an invariant domain for almost all prime $p$ if and only if the Jacobian Conjecture is true.
\end{thm}

\begin{proof} The implication $ \Longleftarrow$ it follows
  from Proposition \ref{UJC}. Suppose that $\mathbb{Z}_{p}$ is invariant for almost all prime $p$. By \cite[Proposition 1.1.19]{poly0} we know that
it is sufficient to show that the Jacobian Conjecture is true over $\mathbb{Z}$. Suppose, by contradiction, that there is some Keller map $f = (f_1,\ldots, f_n) \in \mathcal{MP}_{n}(\mathbb{Z})$  non-invertible. Since $f$ has
coefficients in $\mathbb{Z} \subset \mathbb{Z}_p$ it follows that $g$ is unimodular over $\mathbb{Z}_{p}$ for almost all primes $p$ (see Proposition \ref{menor}). Also, we 
know by the hypothesis that  $f\otimes
\overline{\mathbb{Q}}$ is non-injective (see Theorem \ref{CR}). In particular, by Lemma \ref{injeta}, we have $f$ non-injective over 
$\mathbb{Z}_{p}$ for  infinitely many primes $p$. Fix a prime $p$ such that $\mathbb{Z}_{p}$ is an 
invariant domain. So, we obtain $f \otimes \mathbb{Z}_{p}$ a Keller map non-injective over the invariant complete local domain. Contradiction by  Theorem \autoref{fiber2}.
\end{proof}

A refinement of Lemma \ref{injeta} allows us to replace \textbf{infinitely many primes} by \textbf{almost all primes}. This is the following lemma.

\begin{lemma} \label{injeta2}Let $\alpha_{1},\dots ,\alpha_{m} \in
  \overline{\mathbb{Q}}$ be algebraic numbers. Then there
  is a finite set  
$E$ of rational primes such that for all prime $p \notin
E$ we have an injective homomorphism 
  
$$\mathbb{Z}[\alpha_{1},\dots ,\alpha_{m}] \hookrightarrow \mathcal{O}_{K,p}$$ 

\noindent
where $\mathcal{O}_{K,p}$ is the ring of integers of some finite $K|\mathbb{Q}_{p}$.
\end{lemma}

\begin{proof} It is sufficient to prove the following
\textbf{Fact.} Let $f(T) \in \mathbb{Z}[T]\setminus{}\mathbb{Z}$ be an irreducible polynomial. Then for almost all prime $p$ there is  a finite extension $K|\mathbb{Q}_{p}$ and $\alpha \in \mathcal{O}_{K,p}$ such that $f(\alpha) = 0$.

Let $d$ be the discriminant of the polynomial $f$ and $E := \{ p \mid p\mbox{ is prime with $p\mid d$}\}$. 
Let $p \in \mathbb{Z}\setminus
E$ be a prime and take $\overline{f}(T) \in \mathbb{F}_{p}[T]$, via reduction $\mod p$. 
Let $\alpha \in \overline{\mathbb{F}_{p}}$  be a root of $\overline{f}(T)$ and take $\mathbb{F}_{p^{k}}$ 
the definition field of $\alpha$. Then

$$\overline{f}(\alpha) = 0 \mbox{ and } \overline{f}'(\alpha) \neq 0 \mbox{ by condition $p\notin E$}.$$

Now we recall that there exists a finite extension $K|\mathbb{Q}_{p}$ such that $\mathcal{O}_{K,p}$ is a 
complete discrete valuation with residue field
$\mathbb{F}_{p^{k}}$. Since $\mathcal{O}_{K,p}$ is a complete 
ring we can use the Hensel lemma to conclude that there is some $a \in \mathcal{O}_{K,p}$ such that 
$f(a) = 0$.
\end{proof}

\begin{thm}\label{refinamento} $\mathbb{Z}_{p}$ is an
  invariant domain for infinitely many primes $p$ if and
  only if the Jacobian Conjecture  is
  true. 

\end{thm}

\begin{proof} The implication $ \Longleftarrow$ is
  trivial. Suppose, by contradiction, that $\mathbb{Z}_{p}$ is an invariant domain 
for infinitely many primes $p$ but the Jacobian Conjecture is false. Let $f \in \mathcal{MP}_{N}(\mathbb{Z})$  be a
counterexample with  $\det JF =1$ (see \cite[Proposition 1.1.19]{poly0}). In particular, $f\otimes\overline{\mathbb{Q}}$ is not injective.  Let $\alpha \neq \beta \in \overline{\mathbb{Q}}^N$  be such that 
$f(\alpha) = f(\beta)$. By the Lemma \ref{injeta2}, we know that
$R = \mathbb{Z}[\alpha,\beta] \hookrightarrow \mathcal{O}_{K,p}$ 
for almost all primes $p$. Fix a prime $p$ such that $R \hookrightarrow \mathcal{O}_{K,p}$ and such that $\mathbb{Z}_{p}$
is an invariant domain. So, we obtain $f\otimes\mathcal{O}_{K,p} $ a Keller map, non-injective over the domain 
$\mathcal{O}_{K,p}$. By Lemma \ref{decendo3} we know that there exists a Keller map $g$ over $\mathbb{Z}_{p}$ 
that non-injective. A contradiction by Theorem \autoref{fiber3}.
\end{proof}

As a consequence, we get the following interesting result.

\begin{cor}\label{equivalencia} There is a finite set of
  primes $E$ such that for all prime $p \in
  \mathbb{Z}\setminus{}
E$ we have
$$\mathbb{Z}_{p}\mbox{ is an invariant domain}
\Longleftrightarrow \mathbb{Z}_{p} \mbox{ is a unimodular domain}.$$
\end{cor}

\begin{proof} The implication ($\Longleftarrow$) follows from Proposition \ref{uinvariant}. Suppose ($\Longrightarrow$) is false. Then for infinitely many primes $p$, we have $\mathbb{Z}_{p}$ as an invariant and non-unimodular domain. Since $\mathbb{Z}_{p}$ is invariant for infinitely many primes we have that the Jacobian Conjecture is true by Theorem  
\autoref{refinamento}. Contradiction by
Essen-Lipton theorem.
\end{proof}

\section{Keller-finite domains}

This section introduces the notion of Keller-finite domains and explores their relation with the Jacobian Conjecture. Using this notion we give a simple proof of a result that refines the Essen-Lipton Theorem in some sense (see Theorem \ref{main}).

We start with the main definition.

\begin{dfn} Let $R$ be a domain and $n \in \mathbb{N}$. We say that $R$ is a \textbf{Keller-finite domain in dimension $n$} if it satisfies the following property:

\begin{itemize}
    \item given $f_1,\ldots,f_n \in R[x_1,\ldots,x_n]$ with jacobian $1$ the $R$-module $$R[x_1,\ldots,x_n]/\idd{f_1,\ldots,f_n}$$ is finitely generated.
\end{itemize}
We say that $R$ is a \textbf{Keller-finite domain} if $R$ is Keller domains in dimension $n$ for every $n\in\mathbb{Z}_{\geq 1}$. 
\end{dfn}

The first observation is that any field is Keller finite. This is the following proposition.

\begin{prop} \label{case0} Any field $K$ is a Keller domain.
\end{prop}
\begin{proof} Passing to the an algebraic closure of $K$ we can assume $K$ algebraically closed. Let $R =K[x_1,\ldots,x_n]/\idd{f_1,\ldots,f_n}$ and let $\mathfrak{m}$ be a maximal ideal of $R$. By the \cite[Corollary 11.15]{poly7} we know that $\dim R_{\mathfrak{m}}\leq \dim_{K} \mathfrak{m}/\mathfrak{m}^{2}$ and using the jacobian criterion we have $\dim_{K} \mathfrak{m}/\mathfrak{m}^{2} = n - \mbox{\textbf{rank}}(Jf(\mathfrak{m})) = n -  n = 0$. In particular, $\dim R_{\mathfrak{m}} = 0$. So, $R$ is an Artinian $K$-algebra. In particular, $\dim_{K} R$ is finite.
\end{proof}

\begin{prop}\label{nokeller} Consider the local ring $R = \mathbb{F}_q[[t]]$ of power series with coefficients on $\mathbb{F}_q$. Then $R$ is not a Keller-finite domain.
\end{prop}

\begin{proof} We will show that for every $n\in \mathbb{Z}_{>0}$ there are $f_1,\ldots,f_n\in R[x_1,\ldots,x_n]$ with jacobian $1$ such that the quotient 
$$
    S = R[x_1,\ldots,x_n]/\idd{f_1,\ldots,f_n}
$$
is not finitely generated as a $R$-module. Since $S$ is a $R$-algebra note that $S$ is finitely generated as $R$-module if and only if $S$ is integral over $R$. Pick the polynomials
$$
f_i = x_i-tx_i^{q} \quad \mbox{for every $i \in \{1,\ldots,n\}$}.
$$

Note that $\overline{x_i}\in S$ is not integral over $R$ for every $i$. Indeed, suppose that there is a relation:
$$
\overline{x_i}^{n}+a_{n-1}\overline{x_i}^{n-1}+\cdots+a_0 = 0
$$
in $S$ for some $i$. We can assume $i=1$. Then we have 
$$
x_{1}^{n}+a_{n-1}x_1^{n-1}+\cdots+a_{0} \in \idd{x_1-tx_1^{q},x_2-tx_2^{q},\ldots, x_n-tx_n^{q}}.
$$
So, there are $u_1(t;x_2,\ldots,x_n), \ldots, u_n(t;,x_2,\ldots,x_n) \in \mathbb{F}_p[[t]][x_2,\ldots,x_{n}]$ such that
$$
x_{1}^{n}+a_{n-1}x_1^{n-1}+\cdots+a_{0} = \sum_{i=0}^{m}u_{i}(t;x_2,\ldots,x_n)x_1^{i}-t\sum_{i=0}^{m}u_{i}(t;x_2,\ldots,x_n)x_1^{q+i}
$$
We conclude by looking at the right side of the equation above that the leading monomial is of the form $tu_{k}(t;x_2,\ldots,x_n)x_{1}^{q+k}$ for some $k$. But the left side is monical on $x_1$. This gives us a contradiction.
\end{proof}

In the next proposition, we explore local domains.

\begin{OBS} Let $(R,\mathfrak{m},k)$ be a local ring with residue field $k$ and fraction field $K$. Let $M$ be a $R$-module such that $M\otimes k$ and $M\otimes K$ are finite dimensional vector spaces. This is not enough to conclude that $M$ is a finitely generated $R$-module.
\end{OBS}

\begin{ex} Let $p$ be a prime number and consider the $p$-adic ring $\mathbb{Z}_p$. Consider the $\mathbb{Z}_p$-module
$$
M = \mathbb{Q}_p = \mathbb{Z}_p[1/p]  = \mathbb{Z}_p[t]/\idd{tp-1}.
$$
Note that $M$ is not finitely generated as $\mathbb{Z}_p$-module since $\overline{t}$ is not integral over $\mathbb{Z}_p$. But $\dim_{k} M\otimes k = 0$ and $\dim_{K} M\otimes K = 1$.
\end{ex}

\begin{prop} Let $(R,\mathfrak{m},k)$ be a discrete valuation ring with residue field $k$, fraction field $K$ and uniformizer $t \in R$. Let $f_1,\ldots,f_n \in R$ with the jacobian $1$. Let $S = R[x_1,\ldots,x_n]/\idd{f_1,\ldots,f_n}$ the quotient. Then, $\dim_{k} S\otimes k \leq \dim_{K} S\otimes K$
\end{prop}
\begin{proof} Let $\{u_1,\ldots, u_n\}$ be elements of $R$ such that $\{\overline{u}_1,\ldots,\overline{u}_n\}$ is $k$-independent. We will show that $\{u_1,\ldots, u_n\}$ is $K$-independent. Suppose that there are $a_1,\ldots,a_n \in K$ such that 
$$
    a_1u_1+\cdots+a_nu_n = 0.
$$
Cleaning denominators we can assume $a_i\in R$ for every $i$. Moreover, we can assume that $a_i \mod t \neq 0$ for some $i$. So, reducing $\mod \mathfrak{m}$ we get a non-trivial linear relation over $k$: $\overline{a_1}\cdot \overline{u_1}+\cdots+\overline{a_n}\cdot\overline{u_n} =0$, a contradiction.
\end{proof}

In the next theorem, we show that any complete discrete valuation ring of characteristic zero does satisfy the Keller-finite condition. We start with the following lemma.

\begin{lemma}\label{completion} Let $R$ be a ring and $M$ be a $R$-module. Let $I\subset R$ be an ideal and assume that
\begin{itemize}
    \item $R$ is complete with the $I$-adic topology,
    \item $\bigcap_{n\geq 1}I^{n}M = (0)$, and
    \item $M/IM$ is a $R/I$-module finitely generated
    \end{itemize}
Then $M$ is finitely generated as a $R$-module.
\end{lemma}

\begin{proof} See \cite[\href{https://stacks.math.columbia.edu/tag/00M9}{Lemma 10.96.12}]{stacks-project}.
\end{proof}

\begin{ex} \label{exemplinho} We have seen in Proposition \ref{nokeller} that the ring $R = \mathbb{F}_p[[t]]$ is not a Keller-finite domain: the $R$-module $M = R[x]/\idd{tx^{p}-x}$ is not a finitely generated. But, observe that 
\begin{itemize}
    \item $R$ is complete with respect to $\idd{t}$-topology
    \item $M/\idd{t}M = \mathbb{F}_p$ is finitely generated as $R/\idd{t} = \mathbb{F}_p$-module.
\end{itemize}
But, note that $\bigcap_{n\geq 1}I^{n}M \neq (0)$. Indeed, we have $x \in \bigcap_{n\geq 1}\idd{t}^{n}M$ since for every $n\in \mathbb{N}$ we have:
$$
x = tx^{p} = (tx^{p-1})x = tx^{p-1}tx^{p} = t^{2}x^{2p-1} = t^{2}x^{2p-2}(tx^{p}) = t^{3}x^{3p-2} = \cdots =t^{n}x^{np-(n-1)}
$$
So, $x \in \bigcap_{n\geq 1}\idd{t}^{n}M$ and the intersection is not zero.
\end{ex}

\begin{thm}\label{kellerfinite} Any complete discrete valuation ring of characteristic zero, $(R,\mathfrak{m},\field)$, is a Keller-finite domain.
\end{thm}

\begin{proof}  Let $f_1, \ldots, f_n \in R[x_1,\ldots,x_n] $ with invertible jacobian and consider the quotient $$A = R[x_1,\ldots,x_n]/\idd{f_1,\ldots,f_n}.$$ We must to show that $A$ is a finitely generated $R$-module. By the Lemma \ref{completion}  it is sufficient to show that the intersection of every power of the ideal $\mathfrak{m}A$ is trivial, where $t$ is a generator of $\mathfrak{m}$. Indeed,  by hypothesis $R$ is complete and 
$$
A/\mathfrak{m}A  = A\otimes R/\mathfrak{m} = A\otimes k = k[x_1,\ldots,x_n]/\idd{\overline{f}_1,\ldots,\overline{f}_n}
$$
is a $k$-vector space of finite dimension by Lemma \ref{case0}.

Now, denote by $J$ the intersection of all power of $\mathfrak{m}$ in $A$, that is $J = \bigcap_{n\geq 1}\idd{t}^{n}A$. By the Krull Intersection Theorem (see \cite[Theorem 3.16]{milne2020primer}, it is sufficient to show that $\mathfrak{m}$ is inside in every maximal ideal of $A$, that is $\mathfrak{m}\subset  \bigcap_{\mathfrak{p} \in \spm(A)}\mathfrak{p}$. But, since $R$ is a Jacobson ring so is $A$, because $A$ is finitely generated as $R$-algebra (see \cite[Theorem 4.19]{eisenbud2013commutative}). In particular, the "contraction" ( = inverse image under the quotient map) of the maximal ideal of $A$ to $R$ is also a maximal ideal. But, $R$ is local with maximal ideal $\idd{t}$. So, if $\mathcal{p}$ is a maximal ideal of $A$ then, $R\cap \mathfrak{p} = \idd{t}$. But, this means that $t$ is inside  $\mathfrak{p}$. So, we are done by the Lemma \ref{completion}.
\end{proof}

A consequence is the following theorem

\begin{thm}\label{main} The Jacobian Conjecture  is true if and only if $\mathbb{Z}_p$ is unimodular \textbf{for some prime $p$}.
\end{thm}

\begin{proof} If the Jacobian Conjecture is true then $\mathbb{Z}_p$ is an unimodular domain by Proposition \ref{UJC}. Suppose that $\mathbb{Z}_p$ is unimodular for some prime $p$. Assume, by contradiction, that the Jacobian Conjecture is false. By \cite[Proposition 1.1.19]{poly0} we know that there is a counterexample in the form $f = (f_1,\ldots,f_n) \in \mathcal{MP}_n(\overline{\mathbb{Q}}_p)$ with jacobian $1$ and coefficients on $\mathbb{Z}_p$. In particular, $f$ is not injective by Theorem \ref{CR}. So, there is $\alpha_1 \neq \alpha_2 \in L^n$ such that $f(\alpha_1) = f(\alpha_2)$ for some finite extension $L|\mathbb{Q}_p$. By a translation, we can assume $f(\alpha_1) = 0$. Now, by the Theorem \ref{kellerfinite} we have $\mathcal{O}_{L}$ a Keller-finite domain. So, $\mathcal{O}_{L}[x_1,\ldots,x_n]/\idd{f_1,\ldots,f_n}$ finitely generated as an $\mathcal{O}_{L}$-module. By the valuative criterion (see \cite[Theorem 4.7]{poly12}) there exists a bijection $X(\mathcal{O}_L) \cong X(L)$, where $X$ is the affine scheme defined by the equations $f_1,\ldots, f_n$. But, since we are assuming that $\mathbb{Z}_p$ is unimodular then by Theorem \ref{fiber2} it follows that $f\otimes\mathcal{O}_{L}$ is an injective polynomial map. In particular, $\#X(L) = \#X(\mathcal{O}_L) \leq 1$. Contradiction, since $\#X(L)\geq 2$ ($\alpha_1,\alpha_2 \in X(L)$).
\end{proof}

\noindent \textbf{Acknowledgements.} W.Mendson  thanks Ehsan Tavanfar for the discussion and references of useful topics in the proof of the Theorem \ref{kellerfinite}. The author acknowledges the support of CNPq and Universidade Federal Fluminense (UFF).

\bibliographystyle{amsplain}
\bibliography{annot}

\providecommand{\bysame}{\leavevmode\hbox to3em{\hrulefill}\thinspace}
\providecommand{\MR}{\relax\ifhmode\unskip\space\fi MR }
\providecommand{\MRhref}[2]{%
  \href{http://www.ams.org/mathscinet-getitem?mr=#1}{#2}
}
\providecommand{\href}[2]{#2}
\begin{thebibliography}{10}

\bibitem{poly7}
M.~F. Atiyah and I.~G. Macdonald, \emph{Introduction to commutative algebra},
  Addison-Wesley Publishing Co., Reading, Mass.-London-Don Mills, Ont., 1969.
  \MR{0242802}

\bibitem{poly1}
Hyman Bass, Edwin~H. Connell, and David Wright, \emph{The {J}acobian
  conjecture: reduction of degree and formal expansion of the inverse}, Bull.
  Amer. Math. Soc. (N.S.) \textbf{7} (1982), no.~2, 287--330. \MR{663785}

\bibitem{poly16}
S\l~awomir Cynk and Kamil Rusek, \emph{Injective endomorphisms of algebraic and
  analytic sets}, Ann. Polon. Math. \textbf{56} (1991), no.~1, 29--35.
  \MR{1145567}

\bibitem{de2005reduction}
Michiel De~Bondt and Arno Van~den Essen, \emph{A reduction of the jacobian
  conjecture to the symmetric case}, Proceedings of the American Mathematical
  Society \textbf{133} (2005), no.~8, 2201--2205.

\bibitem{eisenbud2013commutative}
David Eisenbud, \emph{Commutative algebra: with a view toward algebraic
  geometry}, vol. 150, Springer Science \& Business Media, 2013.

\bibitem{ghorpade2019note}
Sudhir~R Ghorpade, \emph{A note on nullstellensatz over finite fields},
  Contemporary Mathematics \textbf{738} (2019).

\bibitem{poly4}
Marvin~J. Greenberg, \emph{Lectures on forms in many variables}, W. A.
  Benjamin, Inc., New York-Amsterdam, 1969. \MR{0241358}

\bibitem{poly12}
Robin Hartshorne, \emph{Algebraic geometry}, Springer-Verlag, New
  York-Heidelberg, 1977, Graduate Texts in Mathematics, No. 52. \MR{0463157}

\bibitem{poly6}
James~S. Milne, \emph{Algebraic number theory}, available from his website
  (2017).

\bibitem{milne2020primer}
\bysame, \emph{A primer of commutative algebra}, 2020.

\bibitem{serre2009use}
Jean-Pierre Serre, \emph{How to use finite fields for problems concerning
  infinite fields}, Contemporary Mathematics \textbf{14} (2009), 183.

\bibitem{stacks-project}
The {Stacks project authors}, \emph{The stacks project},
  \url{https://stacks.math.columbia.edu}, 2021.

\bibitem{poly0}
Arno van~den Essen, \emph{Polynomial automorphisms and the {J}acobian
  conjecture}, Progress in Mathematics, vol. 190, Birkh\"auser Verlag, Basel,
  2000. \MR{1790619}

\bibitem{van2021jacobian}
Arno van~den Essen, Shigeru Kuroda, Anthony~J Crachiola, Arno van~den Essen,
  Shigeru Kuroda, and Anthony~J Crachiola, \emph{The jacobian conjecture: New
  equivalences}, Polynomial Automorphisms and the Jacobian Conjecture: New
  Results from the Beginning of the 21st Century (2021), 91--111.

\bibitem{poly2}
Arno van~den Essen and Richard~J. Lipton, \emph{A {$p$}-adic approach to the
  {J}acobian {C}onjecture}, J. Pure Appl. Algebra \textbf{219} (2015), no.~7,
  2624--2628. \MR{3313498}

\end{thebibliography}

\end{document}